\newtheorem{definition}{Definition}[section]
\newtheorem{theorem}{Theorem}[section]
\newtheorem{lemma}{Lemma}[section]
 \newtheorem{corollary}{Corollary}[section]
\newtheorem{remark}{Remark}[section]
\newtheorem*{maintheorem*}{Main Theorem}
\numberwithin{equation}{section}
\newcommand{\lsim}{\raisebox{-0.13cm}{~\shortstack{$<$ \\[-0.07cm]
      $\sim$}}~}
\renewcommand{\i}{\ifmmode\mathit{\mathchar"7010 }\else\char"10 \fi}
\renewcommand{\j}{\ifmmode\mathit{\mathchar"7011 }\else\char"11 \fi}
\newcommand{\R}{\mathbb{R}}
\newcommand{\N}{\mathbb{N}}
\newcommand{\abs}[1]{\left|#1\right|}
\newcommand{\pt}{\partial_t}
\newcommand{\ptt}{\partial_{tt}^2}
\newcommand{\px}{\partial_x }
\newcommand{\pxx}{\partial_{xx}^2}
\newcommand{\pxxx}{\partial_{xxx}^3}
\newcommand{\pxxxx}{\partial_{xxxx}^4}
\newcommand{\vfi}{\varphi}
\def\begi{\begin{itemize}}
\def\endi{\end{itemize}}
\def\bega{\begin{array}}
\def\enda{\end{array}}
\begin{document}

\title[Regularity and energy transfer for a nonlinear beam equation]{Regularity and energy transfer for a nonlinear beam equation}

\author{G. M. Coclite}
\address[Giuseppe Maria Coclite]{\newline
   Dipartimento di Meccanica, Matematica e Management, Politecnico di Bari,
  Via E.~Orabona 4,I--70125 Bari, Italy.}
\email[]{giuseppemaria.coclite@poliba.it}

\author{G. Fanizza}
\address[Giuseppe Fanizza]{\newline
  Instituto de Astrofis\'\i ca e Ci\^encias do Espa\c co, Faculdade de Ci\^encias, Universidade de Lisboa,
Edificio C8, Campo Grande, P-1740-016, Lisbon, Portugal.}
\email[]{gfanizza@fc.ul.pt}

\author{F. Maddalena}
\address[Francesco Maddalena]{\newline
  Dipartimento di Meccanica, Matematica e Management, Politecnico di Bari,
  Via E.~Orabona 4,I--70125 Bari, Italy.}
\email[]{francesco.maddalena@poliba.it}

\date{\today}

\subjclass[2010]{35L76,35B30,35B65,35D30,35L20,74B20}

\keywords{Adhesion, elasticity, beam equation, regularity, spectral analysis, scale transition, waves}

\thanks{GMC and FM are members of the Gruppo Nazionale per l'Analisi Matematica, la Probabilit\`a e le loro Applicazioni (GNAMPA) of the Istituto Nazionale di Alta Matematica (INdAM). 
GMC  and FM have been partially supported by the  Research Project of National Relevance ``Multiscale Innovative Materials and Structures'' granted by the Italian Ministry of Education, University and Research (MIUR Prin 2017, project code 2017J4EAYB and the Italian Ministry of Education, University and Research under the Programme Department of Excellence Legge 232/2016 (Grant No. CUP - D94I18000260001). GF acknowledges support by FCT under the program {\it Stimulus} with the grant no. CEECIND/04399/2017/CP1387/CT0026.}

\begin{abstract} 
In this paper we study some key effects of a discontinuous forcing term in a fourth order wave equation on a bounded domain, 
modeling  the adhesion of an elastic beam with a substrate through an elastic-breakable interaction.
 By using a spectral decomposition method we show that the main effects induced by the nonlinearity  at the transition from attached to detached states can be traced in a loss of regularity of the solution and in a migration of the total energy through the scales.
\end{abstract}

\maketitle

\section{Introduction}
\label{sec:intro}
The essential mechanism underlying the manifestation of adhesion phenomena in nature relies on the possible intermittency between the two states in which the material bodies experience partial contact or separation and an interesting mathematical problem consists in understanding the effects of this occurrence on the dynamical evolution problem. More precisely, the tricky question asks  for a precise quantification of the nonlinearity  injected    into the system through a discontinuous forcing term.

In this paper we deal with the initial boundary value problem ruled by the elastic  beam equation complemented by a source term
undergoing a sharp discontinuity when a threshold on the displacement is reached. The analysis performed in this note suggests that the transition between the attached-detached states entails two main effects, namely the loss of regularity in the velocity field and a migration of energy through the  scales. We believe that this result suggests an interesting perspective (as in \cite{MRTT}) in the dynamical analysis of a large class of problems as the ones studied, for instance,  in \cite{BK, MPPT}.

The evolution of an elastic beam according to the Bernoulli-Navier model interacting with a rigid substrate through an elastic-breakable forcing term leads to the following   semilinear initial boundary value problem in the unknown function $u(t,x)$ denoting the displacement at the time $t\ge0$ of the material point located in the reference configuration at $x\in [0,L]$:
\begin{equation}
\label{eq:equation}
\begin{cases}
\ptt u=-\kappa^2_1\pxxxx u-\Phi'\left(u\right)&\quad t>0,0<x<L,\\
\px u(t,0)=\px u(t,L)=0&\quad t>0,\\
\pxxx u(t,0)=\pxxx u(t,L)=0&\quad t>0,\\
u(0,x)=v_0(x)&\quad 0<x<L,\\
\pt u(0,x)=v_1(x)&\quad 0<x<L,
\end{cases}
\end{equation}
where $\kappa_1$ is a positive constant representing the flexural stiffness of the beam.
We shall assume that
\begin{equation}
\label{ass:init} 
v_0\in H^2(0,L),\qquad v_1\in L^2(0,L).
\end{equation}


The forcing term $\Phi'$ is  thought to model the adhesive like interaction of the beam with the substrate, allowing the 
debonding after a given  threshold is attained. This behavior is the source of a localized nonlinearity which confers to the evolution problem a nontrivial peculiarity.  Then we  assume $\Phi$  is the following function

\begin{equation}
\label{eq:Phi}
\Phi(u)=\begin{cases}
\kappa^2_2\,u^2/2 &\qquad \text{if $|u|\le 1$},\\
\kappa^2_2/2 &\qquad \text{if $|u|> 1$}.
\end{cases}
\end{equation}
In particular we have for all $u\neq \pm 1$
\begin{equation}
\label{eq:Phi'}
\Phi'(u)=\begin{cases}
\kappa^2_2\,u &\qquad \text{if $|u|< 1$},\\
0 &\qquad \text{if $|u|>1$}.
\end{cases}
\end{equation}

The natural energy associated to  \eqref{eq:equation} (i.e.\ to any solution $u$ to \eqref{eq:equation}), is given at time $t$ by the quantity
\begin{equation}
\label{en}
E[u](t)=\int_0^L\left(\frac{(\pt u(t,x))^2+\kappa^2_1(\pxx u(t,x))^2}{2}+\Phi(u(t,x))\right)dx.
\end{equation}

\begin{definition}
\label{def:sol}
We say that a function $u:[0,\infty)\times[0,L]\to\R$ is a dissipative solution of \eqref{eq:equation} if
\begin{itemize}
\item[($i$)] $u\in C([0,\infty)\times[0,L])$;
\item[($ii$)] $\pt u,\,\pxx u\in L^\infty(0,\infty;L^2(0,L))$ and  $u\in L^\infty(0,T;H^2(0,L))$ for every $T>0$;
\item[($iii$)] $\px u(t,0)=\px u(t,L)=0$ for almost every $t>0$;
\item[($iv$)] $u$ is a \em{weak solution} to \eqref{eq:equation}, i.e.\ for every test function $\vfi\in C^\infty(\R^2)$ with compact support
such that $\px\vfi(\cdot,0)=\px \vfi(\cdot,L)=0$
\begin{equation}
\label{eq:weak}
\begin{split}
\int_0^\infty\int_0^L& \left(u\ptt \vfi+\kappa^2_1\pxx u\pxx \vfi+h_u\vfi\right)dtdx\\
&-\int_0^L v_1(x)\vfi(0,x)dx+\int_0^L v_0(x)\pt\vfi(0,x)dx=0,
\end{split}
\end{equation}
where $h_u\in\partial \Phi'\left(u\right)$, that is the subdifferential of $\Phi'(u)$;
\item[($v$)] $u$ may \em{dissipate energy}, i.e.\ for almost every $t>0$: $E[u](t)\leq E[u](0)$, i.e.\ by taking into account \eqref{en},
\begin{equation}
\label{eq:energydissip}
\begin{split}
\int_0^L&\left(\frac{(\pt u(t,x))^2+\kappa^2_1(\pxx u(t,x))^2}{2}+\Phi(u(t,x))\right)dx\\
&\qquad\qquad\le \int_0^L\left(\frac{(v_1(x))^2+\kappa^2_1(\pxx v_0(x))^2}{2}+\Phi(u_0(x))\right)dx.
\end{split}
\end{equation}
\end{itemize}
\end{definition}
In the paper \cite{CDM} the well-posedness of  \eqref{eq:equation} is studied in detail (in the same spirit of the work \cite{CFLM} concerning the adhesive interaction for the second order wave operator) and existence of solutions
in the sense of Definition \ref{def:sol} is proved, moreover some counterxamples to uniqueness of solutions are provided.

The main result of this paper is the following.

\begin{theorem}
\label{th:main}
Let $u$ be a solution of  \eqref{eq:equation} in the sense of Definition \ref{def:sol}.
If
\begin{align}
\label{ass:i} &\text{$v_0$ and $v_1$ are constants with $|v_0|<1$};\\
\label{ass:ii} &\text{there exists a time $\bar{t}>0$ such that $|u(\bar{t},x)|>1$ for every $x\in[0,L]$};\\
\label{ass:iii} &\text{$u\in C^1([0,\infty)\times[0,L])$};\\
\label{ass:iv} &\text{$u$ is energy preserving, i.e. $E[u](t)= E[u](0)$};
\end{align}
then 
\begin{equation*}
\text{$u({t},\cdot)$ is constant for every $t\ge\bar{t}$}.
\end{equation*}
\end{theorem}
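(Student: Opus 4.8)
The plan is to combine the cosine spectral decomposition adapted to the boundary conditions with a first-contact analysis at the adhesion threshold. The operator $\pxxxx$ with the conditions $\px u=\pxxx u=0$ at $x=0,L$ is self-adjoint, and a direct computation shows its eigenfunctions are exactly the constant $1$ (eigenvalue $0$) and $\cos(n\pi x/L)$, $n\ge1$ (eigenvalue $(n\pi/L)^4$), which form a complete system in $L^2(0,L)$. Writing $u(t,x)=a_0(t)+\sum_{n\ge1}a_n(t)\cos(n\pi x/L)$ with $a_0=\frac1L\int_0^L u\,dx$ and $a_n=\frac2L\int_0^L u\cos(n\pi x/L)\,dx$, equation \eqref{eq:equation} becomes the infinite system $\ddot a_0=-\frac1L\int_0^L\Phi'(u)\,dx$ and $\ddot a_n+\kappa_1^2(n\pi/L)^4 a_n=-\frac2L\int_0^L\Phi'(u)\cos(n\pi x/L)\,dx$ for $n\ge1$, where the boundary terms drop out by the conditions satisfied by $u$ and the eigenfunctions. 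The nonlinear integrals on the right are the \emph{only} mechanism that can transfer energy between modes, so the whole argument is about controlling them.

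First I would localize before the first contact with the threshold. Set $t_1=\inf\{t\ge0:\ \norm{u(t,\cdot)}_{L^\infty}\ge1\}$; by \eqref{ass:i} and continuity $t_1>0$, and on $[0,t_1)$ one has $|u|<1$ everywhere, so by \eqref{eq:Phi'} $\Phi'(u)=\kappa_2^2u$ and \eqref{eq:equation} is the \emph{linear} beam equation. For the linear equation the weak solution with given data is unique: a standard energy estimate (integrating by parts with the given boundary conditions, so that all boundary contributions vanish) shows the difference of two solutions has conserved $H^2\times L^2$ energy and hence vanishes. Consequently, on $[0,t_1)$ the solution coincides with the spatially constant solution $w(t)$ of $\ddot w=-\kappa_2^2 w$, $w(0)=v_0$, $\dot w(0)=v_1$. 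Equivalently, in the mode system the couplings reduce to $\ddot a_n+(\kappa_1^2(n\pi/L)^4+\kappa_2^2)a_n=0$ with $a_n(0)=\dot a_n(0)=0$, whence $a_n\equiv0$ for all $n\ge1$; thus $u$ is spatially constant up to the first contact.

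The heart of the matter is the transition at $t_1$, which is exactly where the non-uniqueness of the adhesion problem lives: a priori the beam could detach on part of $[0,L]$ and remain attached elsewhere, which would make $\Phi'(u(t,\cdot))$ non-constant in $x$ and inject the higher modes. I expect this uniform-crossing step to be the main obstacle. Here assumption \eqref{ass:iii} is decisive. By continuity $u(t_1,\cdot)\equiv w(t_1)=\pm1$ and $\pt u(t_1,\cdot)\equiv\dot w(t_1)=:p$ are spatially constant; moreover \eqref{ass:ii} forces the amplitude $\sqrt{v_0^2+v_1^2/\kappa_2^2}$ to exceed $1$ (otherwise $|u|\le1$ for all times), so $p\neq0$ and the contact is an outward crossing, which after a sign normalization (using that $\Phi$ is even) we take to be $p>0$. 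Uniform continuity of $\pt u$ on a compact time–space slab then gives an $\eta>0$ with $\pt u(t,x)\ge p/2>0$ for $t\in[t_1,t_1+\eta]$ uniformly in $x$, so that $u(t,x)=1+\int_{t_1}^t\pt u(s,x)\,ds>1$ for every $x$: detachment is simultaneous across the whole beam, and no ramified pattern can form.

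Finally, on the detached interval $\Phi'(u)\equiv0$, so \eqref{eq:equation} is again linear (the free beam), and the data $a_n(t_1)=\dot a_n(t_1)=0$ for $n\ge1$ propagate to $a_n\equiv0$; hence $u(t,x)=1+p(t-t_1)$ stays spatially constant and, since $p>0$, remains above $1$ for all later times, so no further contact occurs and the constant regime persists for every $t\ge t_1$, in particular for $t\ge\bar t$. This route in fact yields spatial constancy for all $t\ge0$ and rests chiefly on \eqref{ass:i}–\eqref{ass:iii}, with the $C^1$ regularity doing the essential work at the threshold; the energy-preservation hypothesis \eqref{ass:iv} is fully consistent with this picture, since the solution conserves energy on each linear phase, and in this approach it serves mainly to exclude a priori any dissipative threshold transition.
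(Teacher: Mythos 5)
Your proposal is correct in its essentials, but it takes a genuinely different route from the paper at the decisive step. Both arguments rest on the same cosine decomposition and the same two-regime observation (Lemmas \ref{lm:<1} and \ref{lm:>1}), and both begin by noting that constant data excite only the zeroth mode while $|u|<1$. The divergence is in how the threshold transition is handled. The paper matches only the \emph{zeroth} mode across the transition using \eqref{ass:iii} (formulas \eqref{eq:cont}--\eqref{eq:der}, yielding $C_1^2=\kappa_2^2(A_0^2-L)$) and then invokes the energy-preservation hypothesis \eqref{ass:iv} to conclude $\sum_{n\ge1}B_n^2\nu_n^2=E[u](\bar t)-E[u](0)=0$, hence $B_n=0$. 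You instead squeeze everything out of \eqref{ass:iii}: first, $u(t_1,\cdot)\equiv\pm1$ and $\pt u(t_1,\cdot)\equiv p$ at the first contact time; second, the uniform bound $\pt u\ge p/2$ on a compact slab shows that the whole beam crosses the threshold simultaneously and enters the pure detached regime, where the higher modes vanish because their Cauchy data at $t_1$ vanish. This buys two real improvements: hypothesis \eqref{ass:iv} is never used, so you prove a stronger statement; and your uniform-crossing step justifies a point the paper only waves at (``hence, we are in the hypotheses of Lemma \ref{lm:>1}''), namely that the solution passes directly from the regime of Lemma \ref{lm:<1} to that of Lemma \ref{lm:>1} with no intermediate mixed state (detached on part of $[0,L]$, attached elsewhere), which is exactly where the known non-uniqueness of dissipative solutions could inject higher modes. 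What the paper's route buys in exchange is independence from the fine structure of the contact: the energy bookkeeping kills the $B_n$ without needing the crossing to be transversal.

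There is one genuine soft spot in your write-up: the claim that \eqref{ass:ii} forces $\sqrt{v_0^2+v_1^2/\kappa_2^2}>1$, justified by ``otherwise $|u|\le1$ for all times''. When the amplitude is $<1$ this follows from a continuation argument, since the solution stays strictly inside the linear regime; but when the amplitude equals $1$ the constant solution reaches the threshold with \emph{zero} velocity, uniqueness fails there, and nothing in your argument excludes that some dissipative solution later satisfies \eqref{ass:ii}; in that case $p=0$ and your crossing argument collapses. The case can be repaired without \eqref{ass:iv} by using the dissipation inequality (v) of Definition \ref{def:sol}: if $|u(\bar t,x)|>1$ for every $x$, then $E[u](\bar t)\ge\frac{L}{2}\kappa_2^2=E[u](0)$, so equality must hold, forcing $\pt u(\bar t,\cdot)=0$ and $\pxx u(\bar t,\cdot)=0$; together with the boundary conditions and the continuity of $\px u$ guaranteed by \eqref{ass:iii}, this makes $u(\bar t,\cdot)$ constant, and the free-beam evolution forward of $\bar t$ with constant, zero-velocity data then keeps it constant, which is the desired conclusion. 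With this patch your proof is complete and still independent of \eqref{ass:iv}; to be fair, the paper's own treatment of this zero-velocity contact is no more careful than yours.
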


\begin{remark}
 \label{rm:1}
 The interesting consequence of the previous theorem relies in enlightening the effect of the nonlinearity hidden in the transition between the two regimes ruled by the conditions $\vert u\vert<1$ and $\vert u\vert>1$ corresponding to attached and detached states. 
 It allows to state that a solution experiencing the transition in a region strictly contained in $(0,L)$ fails to be in 
 $C^1([0,\infty)\times[0,L])$.  
 \end{remark}

\section{Spectral decomposition}
\label{sec:2}

We consider the set of eigenvalues $\{\mu_n\}_{n\in\N}$ and eigenvectors $\{u_n\}_{n\in\N}$ of the 
differential operator $\pxx$ with homogeneous Neumann boundary conditions such that
\begin{equation*}
\begin{cases}
\pxx u_n= \mu_n u_n,\>\>\text{in}\> (0,L),&{}\\
\px u_n(0)=\px u_n(L)=0,&{}
\end{cases}
\qquad \text{and}\qquad\int_{0}^{L}\,u_n u_m\,dx=\delta_{nm}.
\end{equation*}
Since $\pxx$ with homogeneous Neumann boundary conditions is a negative operator (see \cite{B}), the eigenvalues are nonpositive, so we set
$\mu_n=-\lambda^2_n$
and get the identity
\begin{equation}
\pxx u_{n}=-\lambda^2_n\,u_n,\qquad \text{in}\>(0,L).
\label{eq:operator}
\end{equation}
A simple bootstrap argument allows us to deduce that for every $n\in \N$,
\begin{equation}
u_n\in C^\infty([0,L])
\end{equation}
and they are eigenvectors of the differential operator $\pxxxx$ satisfying the following conditions:
\begin{equation}
\label{eq:sp1}
\begin{cases}
\pxxxx u_n= \lambda^4_n u_n,\>\>\text{in}\> (0,L),&{}\\
\px u_n(0)=\px u_n(L)=0,&{}\\
\pxxx u_n(0)=\pxxx u_n(L)=0,&{}\\
\end{cases}
\qquad \text{and}\qquad\int_{0}^{L}\,u_n u_m\,dx=\delta_{nm}.
\end{equation}
A direct computation delivers
\begin{equation}
\label{eq:eigen}
u_n=\begin{cases}
L^{-1/2},& \text{if $n=0$},\\
\displaystyle\sqrt{\frac{2}{L}}\cos\left( \frac{n\pi}{L}x \right),& \text{if $n>0$},
\end{cases}
\qquad\qquad \text{and}\qquad\qquad \lambda_n=\frac{n\pi}{L}.
\end{equation}

Let $u$ be a solution of \eqref{eq:equation} according to Definition \ref{def:sol}.
Since $\{u_n\}_{n\in \N}$ is an Hilbert basis in $H^2(0,L)$ we are allowed to decompose $u$ as follows
\begin{equation}
\label{eq:dec}
u(t,x)=\sum_{n=0}^\infty\alpha_n(t) u_n(x).
\end{equation}

Let us start our analysis by considering the two different regimes $|u|<1$ and $|u|>1$.

\begin{lemma}
\label{lm:<1}
If $|u|<1,$ then 
\begin{equation}
\label{eq:1}
\alpha_n(t)=A_n\cos\left(\omega_n t+\vfi_n\right),\qquad \omega_n=\left[\kappa^2_1\,\left( \frac{n\pi}{L} \right)^4+\kappa^2_2\right]^{1/2},\qquad
E[u](t)=\sum_{n=0}^{\infty}\frac{1}{2}A^2_n\,\omega^2_n,
\end{equation}
for almost every $t>0$, all $n\in\N$ and the constants $A_n,\,\vfi_n$ are implicitly defined through the initial conditions 
\begin{equation}
\label{eq:init<1}
v_0(x)=\sum_{n=0}^\infty A_n\cos(\vfi_n)u_n(x),\qquad
v_1(x)=-\sum_{n=0}^\infty A_n\omega_n\sin(\vfi_n)u_n(x).
\end{equation}
\end{lemma}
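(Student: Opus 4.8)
The plan is to exploit that where $|u|<1$ the nonlinearity is inactive: by \eqref{eq:Phi'} the subdifferential term reduces to the single value $h_u=\kappa^2_2 u$, so \eqref{eq:equation} becomes the \emph{linear} beam equation $\ptt u=-\kappa^2_1\pxxxx u-\kappa^2_2 u$ and the spectral modes decouple. Concretely, I would feed into the weak formulation \eqref{eq:weak} the product test functions $\vfi(t,x)=\psi(t)\eta(x)u_n(x)$, where $\psi\in C^\infty_c(\R)$, the eigenfunction $u_n$ is extended to the entire smooth cosine on $\R$, and $\eta$ is a cutoff equal to $1$ on a neighbourhood of $[0,L]$; since $\px u_n(0)=\px u_n(L)=0$ the boundary requirement $\px\vfi(\cdot,0)=\px\vfi(\cdot,L)=0$ holds, and the cutoff is invisible to every integral over $[0,L]$. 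Using $\pxx u_n=-\lambda^2_n u_n$, the orthonormality $\int_0^L u_n u_m\,dx=\delta_{nm}$, and the identity $\int_0^L\pxx u\,\pxx u_n\,dx=\lambda^4_n\alpha_n$ — obtained by integrating by parts twice, all boundary contributions vanishing by the Neumann condition ($iii$) on $u$ together with the conditions \eqref{eq:sp1} on $u_n$ — the weak identity collapses, mode by mode, to the scalar relation
\[
\int_0^\infty\psi''\alpha_n\,dt+\omega^2_n\int_0^\infty\psi\,\alpha_n\,dt-\psi(0)\int_0^Lv_1u_n\,dx+\psi'(0)\int_0^Lv_0u_n\,dx=0,
\]
with $\omega^2_n=\kappa^2_1\lambda^4_n+\kappa^2_2=\kappa^2_1(n\pi/L)^4+\kappa^2_2$.

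This is precisely the distributional form of the harmonic-oscillator Cauchy problem $\ddot\alpha_n+\omega^2_n\alpha_n=0$ with $\alpha_n(0)=\int_0^Lv_0u_n\,dx$ and $\dot\alpha_n(0)=\int_0^Lv_1u_n\,dx$. Since $\alpha_n(t)=\int_0^L u(t,x)u_n(x)\,dx$ is continuous by ($i$), a distributional solution of a constant-coefficient linear ODE is the classical one, so $\alpha_n(t)=A_n\cos(\omega_nt+\vfi_n)$ with $A_n\cos\vfi_n=\int_0^Lv_0u_n\,dx$ and $-A_n\omega_n\sin\vfi_n=\int_0^Lv_1u_n\,dx$; expanding $v_0$ and $v_1$ in the basis $\{u_n\}$ then yields \eqref{eq:init<1} and the first two assertions of \eqref{eq:1} (now for every $t$, each $\alpha_n$ being smooth).

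For the energy I would invoke Parseval, legitimate thanks to the $L^\infty(0,\infty;L^2(0,L))$ bounds in ($ii$). Orthonormality gives $\int_0^L(\pt u)^2dx=\sum_n\dot\alpha_n^2$ and $\int_0^L(\pxx u)^2dx=\sum_n\lambda^4_n\alpha_n^2$, while $\Phi(u)=\kappa^2_2u^2/2$ in this regime gives $\int_0^L\Phi(u)\,dx=\tfrac{\kappa^2_2}{2}\sum_n\alpha_n^2$. Substituting into \eqref{en} and grouping the potential with the bending term produces $E[u](t)=\tfrac12\sum_n(\dot\alpha_n^2+\omega^2_n\alpha_n^2)$; inserting $\alpha_n=A_n\cos(\omega_nt+\vfi_n)$ and $\dot\alpha_n=-A_n\omega_n\sin(\omega_nt+\vfi_n)$ and using $\sin^2+\cos^2=1$ collapses each summand to the time-independent $A_n^2\omega^2_n$, giving the claimed $E[u](t)=\tfrac12\sum_n A_n^2\omega^2_n$.

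The main obstacle is the rigorous descent from the weak formulation to the decoupled scalar ODEs: one must exhibit genuinely admissible product test functions (compactly supported and boundary-compatible) and discard every boundary term in the double integration by parts, which is exactly where the homogeneous Neumann data ($iii$) on $u$ and the full system \eqref{eq:sp1} for $u_n$ enter. A secondary technical point is the term-by-term validity of Parseval for the energy, secured by ($ii$); everything afterwards — solving the oscillator and simplifying the energy sum — is routine.
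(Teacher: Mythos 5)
Your proof is correct and follows essentially the same route as the paper's: spectral decomposition in the Neumann cosine basis, reduction to the decoupled harmonic oscillators $\ddot\alpha_n+\omega^2_n\alpha_n=0$ (the nonlinearity being inactive for $|u|<1$), and Parseval for the energy identity. The only difference is one of rigor: the paper formally substitutes \eqref{eq:dec} into \eqref{eq:equation}, whereas you derive the mode equations from the weak formulation \eqref{eq:weak} via explicit product test functions and the distributional-to-classical ODE argument --- a more careful execution of the same idea.
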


\begin{proof}
By using \eqref{eq:sp1} and \eqref{eq:dec} in \eqref{eq:equation} we get
\begin{equation*}
\begin{cases}
\ddot\alpha_n+\left[\kappa^2_1\,\left( \frac{n\pi}{L} \right)^4+\kappa^2_2\right]\,\alpha_n=0,&{}\\
\alpha_n(0)=A_n\cos(\vfi_n),&{}\\
\dot\alpha_n(0)=-A_n\omega_n\sin(\vfi_n),&{}
\end{cases}
\end{equation*}
which returns the first two equations of  \eqref{eq:1}, while the third one follows from
\begin{align}
E[u](t)=&\sum_{n=0}^{\infty}\frac{1}{2}\left\{ \left[\partial_t\left(A_n\cos\left( \omega_n t+\vfi_n \right)\right)\right]^2+\omega^2_n A^2_n\cos^2\left( \omega_n t+\vfi_n \right) \right\}\nonumber\\
=&\sum_{n=0}^{\infty}\frac{1}{2}\left\{ A^2_n\omega^2_n\sin^2\left( \omega_n t+\vfi_n \right)+\omega^2_n A^2_n\cos^2\left( \omega_n t+\vfi_n \right) \right\}
=\sum_{n=0}^{\infty}\frac{1}{2}A^2_n\,\omega^2_n\,.\nonumber
\end{align}
\end{proof}

\begin{lemma}
\label{lm:>1}
If $|u|>1,$ then 
\begin{align}
\label{eq:3}
\alpha_n(t)&=\begin{cases}B_n\cos\left(\nu_n t+\psi_n\right), &\text{if $n\not=0$},\\
C_0+C_1 t, &\text{if $n=0$},
\end{cases}
\qquad \nu_n=|\kappa_1|\,\left( \frac{n\pi}{L} \right)^2,\\
\label{eq:4}
E[u](t)&=\frac{L}{2}\kappa^2_2+\frac{1}{2}C^2_1+\frac{1}{2}\sum_{n=1}^{\infty}B^2_n\,\nu^2_n,
\end{align}
for almost every $t>0$, all $n\in\N$ and the constants $B_n,\,\psi_n$ are implicitly defined through the initial conditions 
\begin{equation*}
v_0(x)=\sum_{n=1}^\infty B_n\cos(\psi_n)u_n(x)+C_0 u_0(x),\qquad
v_1(x)=-\sum_{n=1}^\infty B_n\omega_n\sin(\psi_n)u_n(x)+C_1 u_0(x).
\end{equation*}
\end{lemma}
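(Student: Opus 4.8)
The plan is to repeat the scheme used for Lemma~\ref{lm:<1}, the only structural difference being that on the region $|u|>1$ the forcing term disappears: by \eqref{eq:Phi'} we have $\Phi'(u)=0$, so that \eqref{eq:equation} reduces to the purely linear beam equation $\ptt u=-\kappa^2_1\pxxxx u$. Inserting the spectral expansion \eqref{eq:dec} and projecting onto $u_n$ by means of the eigenrelations and orthonormality collected in \eqref{eq:sp1}, I would obtain the decoupled family of ordinary differential equations $\ddot\alpha_n+\kappa^2_1\lambda^4_n\,\alpha_n=0$, with $\lambda_n=n\pi/L$. The point to watch is the lowest mode: since $\lambda_0=0$ and, unlike in Lemma~\ref{lm:<1}, there is now no additive $\kappa^2_2$ keeping the frequency positive, the equation for $\alpha_0$ degenerates to $\ddot\alpha_0=0$.

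Solving these ODEs is then immediate. For $n\ge 1$ the positive frequency $\nu_n=|\kappa_1|\lambda^2_n=|\kappa_1|(n\pi/L)^2$ gives the oscillatory solution $\alpha_n(t)=B_n\cos(\nu_n t+\psi_n)$, whereas the degenerate zeroth equation integrates to the affine profile $\alpha_0(t)=C_0+C_1 t$; this is precisely the novelty of the detached regime compared with \eqref{eq:1}, where every mode, including $n=0$, oscillates. Matching at the relevant initial time against the expansions of $v_0$ and $v_1$ fixes the constants $B_n,\psi_n,C_0,C_1$, which yields \eqref{eq:3}.

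For the energy \eqref{eq:4} I would again proceed as in Lemma~\ref{lm:<1}. Because $\Phi\equiv\kappa^2_2/2$ on $|u|>1$, the potential part of \eqref{en} contributes the constant $\tfrac{L}{2}\kappa^2_2$. The kinetic and flexural parts reduce, via term-by-term differentiation of \eqref{eq:dec} and the orthonormality in \eqref{eq:sp1}, to $\tfrac12\sum_n\bigl(\dot\alpha_n^2+\kappa^2_1\lambda^4_n\alpha_n^2\bigr)$; the $n=0$ term is $\tfrac12 C_1^2$, carrying no flexural contribution since $\lambda_0=0$, while for $n\ge1$ the identity $\nu_n^2=\kappa^2_1\lambda^4_n$ together with $\sin^2+\cos^2=1$ collapses each summand to $\tfrac12 B_n^2\nu_n^2$, giving \eqref{eq:4}. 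The only genuine obstacle is rigor rather than computation: one must justify interchanging the summation with integration and with $\pt$, which is legitimate because Definition~\ref{def:sol} places $u(t,\cdot)\in H^2(0,L)$ and $\pt u(t,\cdot)\in L^2(0,L)$, so the relevant series converge in the appropriate norms. Keeping careful track of the affine zeroth mode and of the additive constant $\tfrac{L}{2}\kappa^2_2$ is the second point requiring attention.
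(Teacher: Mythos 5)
Your proposal is correct and follows essentially the same route as the paper: substitute the expansion \eqref{eq:dec} into \eqref{eq:equation} using \eqref{eq:sp1}, solve the decoupled ODEs $\ddot\alpha_n+\kappa^2_1\lambda_n^4\alpha_n=0$ (with the degenerate affine zeroth mode), and compute the energy via orthonormality and the trigonometric identity, with the constant $\tfrac{L}{2}\kappa^2_2$ coming from $\Phi\equiv\kappa^2_2/2$. Your explicit treatment of the $n=0$ degeneracy and the remark on interchanging sums with integration and $\pt$ are points the paper leaves implicit, but the argument is the same.
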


\begin{proof}
By using \eqref{eq:sp1} and \eqref{eq:dec} in \eqref{eq:equation} we get
\begin{align*}
&\ddot\alpha_n+\kappa^2_1\,\left( \frac{n\pi}{L} \right)^4\,\alpha_n=0,\\
&\alpha_n(0)=
\begin{cases}
A_n\cos(\vfi_n),&\text{if $n\not=0$},\\
C_0,&\text{if $n=0$},
\end{cases}
\quad
\dot\alpha_n(0)=
\begin{cases}
-A_n\omega_n\sin(\vfi_n),&\text{if $n\not=0$},\\
C_1,&\text{if $n=0$},
\end{cases}
\end{align*}
which returns \eqref{eq:3}, while \eqref{eq:4} follows from
\begin{align*}
E[u](t)=&\frac{L}{2}\kappa^2_2+\frac{1}{2}C^2_1+
\sum_{n=1}^{\infty}\frac{1}{2}\left\{ \left[\partial_t\left(B_n\cos\left( \nu_n t+\psi_n \right)\right)\right]^2+\nu^2_n B^2_n\cos^2\left( \nu_n t+\psi_n \right) \right\}\\
=&\frac{L}{2}\kappa^2_2+\frac{1}{2}C^2_1+\sum_{n=1}^{\infty}\frac{1}{2}\left\{ B^2_n\nu^2_n\sin^2\left( \nu_n t+\psi_n \right)+\nu^2_n B^2_n\cos^2\left( \nu_n t+\psi_n \right) \right\}\\
=&\frac{L}{2}\kappa^2_2+\frac{1}{2}C^2_1+\sum_{n=1}^{\infty}\frac{1}{2}B^2_n\,\nu^2_n\,.
\end{align*}
\end{proof}

\begin{proof}[Proof of Theorem \ref{th:main}]
Due to \eqref{ass:i} we decompose the initial data as in \eqref{eq:init<1}.
Thanks to \eqref{eq:dec}, and \eqref{eq:1}, we know that 
\begin{equation*}
u(0,x)=\alpha_0(0) u_0,\qquad u_0=L^{-1/2},\qquad
E[u](0)=\frac{1}{2}A^2_0\omega^2_0=\frac{1}{2}A^2_0\,\kappa^2_2,\qquad
A_n=0,\>n\not=0.
\end{equation*}
Moreover, as long as $|u(t,\cdot)|<1$, due to \eqref{eq:1}, 
\begin{equation*}
|\alpha_0(t)|\le A_0\,L^{-1/2}.
\end{equation*}
Let us distinguish three possible scenarios.

If 
\begin{equation*}
A_0<L^{1/2},
\end{equation*}
 we have that $|u(t,x)|\le |\alpha_0(t)|\le A_0\,L^{-1/2}< 1$. 
 As a consequence \eqref{ass:ii} is never satisfied.
 
If
\begin{equation*}
A_0\ge L^{1/2},
\end{equation*} 
from \eqref{eq:1} in Lemma \ref{lm:<1}, we have that $|\alpha_0|$ monotonically grows up to the value $|u(\bar{t},x)|\ge 1$, for all $x$ in $\left[ 0,L \right]$. From \ref{lm:>1}, we have that $\partial_t|\alpha_0|(\bar{t})\ge 0$ so, by a translation in time argument, we can deal with the solution evaluated in $t-\bar{t}$. Hence, we are in the hypotheses of Lemma \ref{lm:>1}. From here, we then get that
\begin{equation*}
E[u](\bar{t})=\frac{L}{2}\kappa^2_2+\frac{1}{2}C^2_1+\sum_{n=1}^{\infty}\frac{1}{2}B^2_n\,\nu^2_n\,.
\end{equation*}
Due to \eqref{ass:iv}, $E[u](\bar{t})=E[u](0)=\frac{1}{2}A^2_0\kappa^2_2$.

Let us exploit the condition $|\alpha_0(\bar{t})|=L^{1/2}$. Due to the assumption \eqref{ass:iii}, from the \eqref{eq:1} of Lemma \ref{lm:<1} and \eqref{eq:3} of Lemma \ref{lm:>1}, we have that
\begin{equation}
\label{eq:cont}
A_0\cos\left(|\kappa_2|\bar{t}+\phi_o\right)=C_0+C_1 \bar{t}\,.
\end{equation}
Moreover, we also have that
\begin{equation}
\label{eq:der}
-A_0|\kappa_2| \sin\left(|\kappa_2|\,\bar{t}+\phi_0\right)L^{-1/2}=C_1L^{-1/2}\,.
\end{equation}
By combining \eqref{eq:cont} and \eqref{eq:der}, we get
\begin{equation}
C^2_1=\kappa^2_2\left( A^2_0-L \right)\,.
\end{equation}
Therefore
\begin{equation*}
E[u](\bar{t})-E[u](0)=\sum_{n=1}^{\infty}\frac{1}{2}B^2_n\,\nu^2_n=0\,.
\end{equation*}
This implies that $B_n=0$, for all $n\ge 1$. Therefore, the only allowed solution is the constant one.

Eventually, the last case $A_0\ge L^{1/2}$ and $\dot\alpha_0(0)=0$ is already in the assumptions of Lemma \ref{lm:>1}, the solution is
\begin{equation}
u_0=C_0\,L^{-1/2}=A_0\,L^{-1/2}
\end{equation}
where $C_0=A_0$ is due to the continuity and $C_1=0$ is due to the derivability in $t=0$.
 \end{proof}
 
 \begin{corollary}
 \label {cor:1}
 Assume that \eqref{ass:i} and \eqref{ass:ii} hold. Let $u$ be a dissipative solution of \eqref{eq:equation} according to Definition \ref{def:sol}.
 If \eqref{ass:iii} is not satisfied, then
\begin{equation}
\label{eq:cor1} 
|C_1|\le\abs{\kappa_2}\sqrt{A^2_0-L},\qquad\qquad B_n\lsim n^{-5/2},\,\forall n>1.
\end{equation}
 \end{corollary}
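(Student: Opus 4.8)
The plan is to re-run the argument of Theorem \ref{th:main} up to the energy balance, but to replace the conservation law \eqref{ass:iv}, which is no longer at our disposal, by the dissipation inequality \eqref{eq:energydissip} guaranteed by Definition \ref{def:sol}. Concretely, assumptions \eqref{ass:i}--\eqref{ass:ii} force, exactly as in the proof of Theorem \ref{th:main}, that the data live on the single mode $u_0$, so that $A_n=0$ for $n\neq0$, $E[u](0)=\frac{1}{2}A_0^2\kappa_2^2$, and that the transition \eqref{ass:ii} can occur only if $A_0\ge L^{1/2}$, in which case $\abs{\alpha_0(\bar t)}=L^{1/2}$ and $\abs{C_0}=L^{1/2}$. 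First I would record the matching produced by the \emph{mere continuity} of $u$ (Definition \ref{def:sol}, item ($i$)) at the onset of detachment, which I translate to $t=0$: since $\alpha_n\equiv0$ for $n\neq0$ before detachment, continuity gives $B_n\cos\psi_n=0$, whence $\alpha_n(t)=\tilde B_n\sin(\nu_n t)$ for $n\ge1$. Crucially, because \eqref{ass:iii} is dropped we may \emph{not} differentiate this matching, so $C_1$ is no longer pinned by the incoming velocity and is constrained only energetically.

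Next I would insert the detached--regime energy \eqref{eq:4} of Lemma \ref{lm:>1} into $E[u](\bar t)\le E[u](0)$, obtaining
\begin{equation*}
\frac{L}{2}\kappa_2^2+\frac{1}{2}C_1^2+\frac{1}{2}\sum_{n\ge1}B_n^2\nu_n^2\le\frac{1}{2}A_0^2\kappa_2^2 .
\end{equation*}
Since every summand is nonnegative, discarding the series yields immediately $C_1^2\le\kappa_2^2(A_0^2-L)$, i.e.\ the first estimate in \eqref{eq:cor1}; discarding instead the $C_1^2$ term yields the global bound $\sum_{n\ge1}B_n^2\nu_n^2\le\kappa_2^2(A_0^2-L)$. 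Keeping only the $n$-th term of this convergent series already gives the crude pointwise decay $\abs{B_n}\le\abs{\kappa_2}\sqrt{A_0^2-L}/\nu_n\lsim n^{-2}$.

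To reach the sharper exponent $n^{-5/2}$ I would upgrade the spatial regularity of the velocity just after detachment. The natural first attempt is to interpolate $u\in L^\infty(0,T;H^2)$ with $\pt u\in L^\infty(L^2)$, giving $u\in C^{0,1/2}([0,T];H^1)$, and to exploit that $\px u$ vanishes at the detachment instant (the incoming state is constant in $x$): then $\norm{\px u(t,\cdot)}_{L^2}^2=\sum_{n\ge1}B_n^2\sin^2(\nu_n t)\lambda_n^2\lsim t$, and choosing $t\sim\nu_n^{-1}$ isolates the $n$-th mode. The main obstacle is precisely that this H\"older-$1/2$ information only reproduces $n^{-2}$; squeezing out the extra half-power to $n^{-5/2}$ amounts to placing the post-detachment velocity field in $H^{1/2}(0,L)$, that is $\sum_{n\ge1}B_n^2\nu_n^2\lambda_n<\infty$, whose single-mode extraction $B_n^2\nu_n^2\lambda_n\lsim 1$ gives exactly $B_n\lsim n^{-5/2}$. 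This half-derivative gain reflects that the detachment front endows $\pt u$ with at most a jump-type spatial singularity while $u$ itself remains continuous and $H^2$; establishing it rigorously from the continuity of $u$ across the (possibly $x$-dependent) detachment front is the delicate step, and once it is in hand the single-mode bound closes \eqref{eq:cor1}.
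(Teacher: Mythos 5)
Your handling of the first estimate coincides with the paper's proof: assumptions \eqref{ass:i}--\eqref{ass:ii} force $A_0\ge L^{1/2}$ and $E[u](0)=\tfrac12 A_0^2\kappa_2^2$, and inserting the detached-regime energy \eqref{eq:4} of Lemma \ref{lm:>1} into the dissipation inequality of Definition \ref{def:sol} gives exactly the paper's \eqref{eq:negE}; dropping the nonnegative series yields $C_1^2\le\kappa_2^2(A_0^2-L)$, and dropping $C_1^2$ instead yields the paper's key bound \eqref{eq:cutoff}, namely $\sum_{n\ge1}B_n^2n^4\le \kappa_2^2(A_0^2-L)L^4/(\pi^4\kappa_1^2)<\infty$. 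Up to this point the two arguments are identical.

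The divergence comes in the very last step, and it is where your proposal has a genuine gap. The paper reads the decay $B_n\lsim n^{-5/2}$ directly off the summability in \eqref{eq:cutoff}: for the series $\sum_n B_n^2n^4$ to converge, its general term must decay faster than $1/n$, i.e.\ $B_n^2n^4\lsim n^{-1}$, which is precisely the second estimate of \eqref{eq:cor1}. (This is a generic decay-rate reading, which is what the symbol $\lsim$ is meant to carry here; as you implicitly sensed, a literal pointwise bound for every $n$ does not follow from summability alone, e.g.\ for lacunary sequences.) You instead stop at the rigorous single-term extraction $B_n\lsim n^{-2}$ and then try to recover the extra half power by upgrading the post-detachment velocity to $H^{1/2}(0,L)$, i.e.\ $\sum_n B_n^2\nu_n^2\lambda_n<\infty$. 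That route is not completed --- you yourself label the passage across the detachment front ``the delicate step'' and leave it unproven --- and it is also at odds with what the problem supplies: the energy inequality controls exactly $\sum_n B_n^2\nu_n^2$, one half-derivative short of what your route needs, Definition \ref{def:sol} offers no further regularity of $\pt u$, and the whole thrust of the corollary (cf.\ Remark \ref{rm:1} and Remark \ref{rm:2}) is that $\pt u$ \emph{loses} regularity at the transition, so extra smoothness of the velocity should not be expected. As written, then, your proposal establishes the first estimate of \eqref{eq:cor1} and the weaker bound $B_n\lsim n^{-2}$, but not the stated $n^{-5/2}$ decay; the missing ingredient is the paper's summability-to-decay-rate step applied to \eqref{eq:cutoff}, not any additional regularity theory.
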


 \begin{proof}
 Due to \eqref{ass:i} and \eqref{ass:ii}, we have that $A_0\ge L^{1/2}$.
 Moreover, from  \eqref{ass:iii}, we have that
\begin{equation}
\label{eq:negE}
\frac{L}{2}\kappa^2_2+\frac{1}{2}C^2_1+\frac{1}{2}\sum_{n=1}^{\infty}B^2_n\,\nu^2_n-\frac{1}{2}A^2_0\,\kappa^2_2=E[u](\bar{t})-E[u](0)\le 0\,,
\end{equation}
which leads to
\begin{equation*}
C^2_1-\kappa^2_2\,\left(A^2_0-L\right)\le-\sum_{n=1}^{\infty}B^2_n\,\nu^2_n\le 0,
\end{equation*}
that is the first of \eqref{eq:cor1}.

On the other hand, from \eqref{eq:negE} and \eqref{eq:3} of Lemma \ref{lm:>1}, we have that
\begin{equation}
\label{eq:cutoff}
0\le \sum_{n=1}^{\infty}B^2_n\,n^4\le \frac{\kappa^2_2\left( A^2_0-L \right)\,L^4}{\pi^4\,\kappa^2_1}<\infty,
\end{equation}
and then the second of \eqref{eq:cor1}.
 \end{proof}
 
 \begin{remark}
 \label{rm:2}
 The previous results lead to the following conclusion.
 A solution of \eqref{eq:equation} experiencing a transition from the attached to the  detached regimes,
 according to the requirements to Corollary \ref{cor:1}, exhibits energy migration through the scales ruled by the second  of \eqref{eq:cor1}. 
 \end{remark}

\bibliographystyle{abbrv}
\bibliography{CFP-ref}

\end{document}